\newcommand\overcirc[1]{\raisebox{10pt}{\tiny$\circ$}{\kern-6.5pt}\mbox{$#1$}}
\newcommand\undersym[2]{\raisebox{-5pt}{\tiny$#2$}{\kern-12pt}\mbox{$#1$}}
\newcommand\oversym[2]{\raisebox{8pt}{\tiny$#2$}{\kern-10pt}\mbox{$#1$}}
\theoremstyle{plain}
\newtheorem{thm}{Theorem}[section]
\newtheorem{defn}[thm]{Definition}
\newtheorem{example}{Example}
\begin{document}	
	\date{}
	\title{\bf{Efficient method for fractional L\'{e}vy-Feller advection-dispersion equation using Jacobi polynomials}}
	\maketitle
	\vspace{-1.13cm}
	\vspace{-0.5cm}
	\begin{center}
		\textbf{N. H. Sweilam$^{a}$, M. M. Abou Hasan$^{b}$}\end{center}
	\begin{center}
		\small{$^{a,b}$Department of Mathematics, Faculty of Science, Cairo University, Giza, Egypt}\\

		\small{E-mails:  nsweilam@sci.cu.edu.eg $^{a}$, muneere@live.com $^{b}$	}\\
	\end{center}
\textbf{Abstract:} \\

  In this paper, a novel formula expressing explicitly the fractional-order derivatives, in the sense of Riesz-Feller operator, of Jacobi polynomials is presented. Jacobi spectral collocation method together with trapezoidal rule are used to reduce the fractional L\'{e}vy-Feller advection-dispersion equation (LFADE) to a system of algebraic equations which greatly simplifies solving like this fractional differential equation.  Numerical simulations with some comparisons are introduced to confirm the effectiveness and reliability of the proposed technique for the L\'{e}vy-Feller fractional partial differential equations. 
\begin{flushleft}
\small \textbf{Keywords}:  L\'{e}vy-Feller advection-dispersion equation; Riesz-Feller fractional derivative; Spectral method; Jacobi polynomials; Trapezoidal rule.

\end{flushleft}
%

\section{Introduction}
\qquad Recently, the notion of fractional calculus and differential operators of
fractional order has gained great popularity due to its engaging applications in a different fields, such as engineering, finance, system control, hydrology, viscoelasticity, and physics (\cite{Benson D A}, \cite{Podlubny}, \cite{Raberto M}, \cite{Koeller R C}, \cite{R. Metzler}). The fractional models often can be described as an ordinary fractional differential equations or partial fractional differential equations. These differential equations are more appropriate than the standard integer-order ones for describing the memorial and genetic characteristic of many phenomena and materials \cite{Samko S G}. 

As is well-known, analytic solutions of most fractional differential equations can not always be obtained explicitly, and therefore the employment of different numerical techniques for solving such equations is necessary. Some of the proposed numerical methods for solving such equations are: finite difference methods (\cite{F. Zeng}, \cite{H. Zhang}, \cite{M.Ci4}, \cite{M.Ci42}, \cite{N. H. hnansp} \cite{N. H. MunerNONL}), finite element methods \cite{V.J. Ervin}, semi-analytic methods (\cite{A. Golbabai},  \cite{V. Daftardar},  \cite{Y. Cenesiz},  \cite{S. Das}), spectral methods (\cite{Bhrawy2zaky}, \cite{N. H. Sspectr2}, \cite{N. H. Muner}), higher order numerical methods \cite{Y. Yan}.

Spectral methods are a class of techniques in which the numerical solutions are expressed in
terms of certain ”basis functions”, which may be orthogonal polynomials (see, e.g.  \cite{David}, \cite{Jie Shen}, \cite{Trefethen} and the references therein). Collocation method, one of the three well-known kinds of spectral methods, has become increasingly common for solving ordinary and partial differential equations. Spectral collocation method has an exponential convergence rate, so it is valuable in providing highly accurate solutions to nonlinear differential equations even using a small number of grids. Choosing of collocation points has very important role in the convergence and efficiency of the collocation method \cite{Jie Shen}. In  reality, spectral collocation methods have been used for the linear and nonlinear fractional partial differential equations (\cite{Bhrawy2zaky}, \cite{Khader2}, \cite{N. H. Sspectr2}, \cite{N. H. Sspectr3}, \cite{Shahrokh}) and for the fractional integro-differential equations (\cite{Khader}, \cite{Eslahchi}, \cite{N. H. Sspectr1}, \cite{MaX}).

The Jacobi polynomials, which we will denote by ($J_k^{\beta,\gamma}(x) ,\  k\geqslant 0, \  \beta>-1, \gamma>-1$), have been used extensively in mathematical analysis and practical applications, and play an important role in the analysis and implementation of spectral methods. The use of general Jacobi polynomials has the advantages of obtaining the solutions in terms of the Jacobi parameters $\beta$ and $\gamma$. Hence to generalize and instead of developing approximation results for each particular pair of indexes, it would
be very useful to carry out a systematic study on Jacobi polynomials with general indexes, which can then be
directly applied to other applications. 

In physics, anomalous diffusion phenomena are modeled using fractional derivatives, such that the particles sawing differently than the classical Brownian motion model \cite{R. Metzler}, they follow L\'{e}vy stable motion \cite{M.Ci4}. The Fokker-Planck equations, Reaction-diffusion equations, diffusion advection equations, and Kinetic equations of the diffusion can be applications of the phenomenon of anomalous diffusion. The fractional advection-dispersion equation (FADE), also known as the fractional kinetic equation \cite{R. Hilfer}, was shown to be an extension of the continuous-time random walk model. In groundwater hydrology FADE is utilized to model the transport of passive tracers carried by fluid flow in a porous medium \cite{Q. Liu}.

Considerable numerical methods for solving FADE have been proposed. Meerschaert \textit{et al.} \cite{M.M. Meerschaert} developed practical numerical methods to solve the one-dimensional space FADE with variable coefficients on a finite domain. Roop \cite{J. P. Roop} proposed the finite-element method to approximate FADE in two spatial dimensions. Liu \textit{et al.} \cite{Q. Liu} proposed an approximation of the L\'{e}vy-Feller advection-dispersion process by employing a random walk and finite difference methods. EI-Sayed \textit{et al.} \cite{A. M. A. EI-Sayed} used Adomian's decomposition method to solve an advection-dispersion model with a fractional derivative in the Caputo sense. Golbabai \textit{et al.} \cite{A. Golbabai} used homotopy perturbation method for finding analatic solution of FADE. Shen \textit{et al.} \cite{S. Shen} used weighted Riesz fractional finite-difference approximation scheme for FADE. Recently, Bhrawy \textit{et al.} presented the operational matrices for solving the two sided FADE in \cite{Bhrawy4}. More recently, Feng \textit{et al.} \cite{L.B. Feng} based on the weighted and shifted Gr\"{u}nwald difference (WSGD) operators they approximated the Riesz space fractional derivative to find numerical solution of Riesz space FADE. 


In this paper, we consider the L\'{e}vy-Feller advection-dispersion equation (LFADE) \cite{Q. Liu}, with source term, which obtains form standard advection-dispersion equation by replacing the second-order space derivative with the Riesz-Feller derivative of order $\alpha$ and skewness $\theta$, ($|\theta|  \leq min \{\alpha, 2 - \alpha\}$). LFADE takes the following form:


\begin{equation}\label{LVAd-RF}
\frac{\partial u(x,t)}{\partial t}=dD^{\alpha}_{\theta}u(x,t)-e\frac{\partial u(x,t)}{\partial x}+s(x,t), \ \ \ \ d>0, \ \ e\geq 0,\ \  t\geq0, \ \ \ x \in \mathbb{R},
\end{equation}
with initial condition:  \begin{equation} \label{init levyfeller ad}
u(x,0)=f(x),
\end{equation}
where the operator $D^{\alpha}_{\theta}$ is the Riesz-Feller fractional derivative of order $\alpha$ and skewness $\theta$.  
The constants $d,\ e$ represent the dispersion coefficient and the average fluid velocity respectively 
and $s(x,t)$ is the source term.
The fundamental solution of (\ref{LVAd-RF}) and (\ref{init levyfeller ad}) has been derived using the Fourier transform \cite{F. Huang} as:
\begin{equation}\label{FUsOLVAD}
{{G'}}_{\alpha}(k,t;\theta)=exp(-ta|k|^{\alpha}e^{isign(k)\theta\pi/2}+itbk), \ \ \ k\in \mathbb{R}.
\end{equation}

Numerical studies of Eq.(\ref{LVAd-RF}) have been mostly obtained by finite difference methods (FDMs) (see \cite{Q. Liu})  with their limited accuracy. That because FDMs have a local character, while fractional derivatives are essentially global differential operators. Hence, global schemes such as spectral methods may be more appropriate for discretizing fractional operators.


Our purpose of this paper is to construct an accurate numerical technique to solve (\ref{LVAd-RF}) and (\ref{init levyfeller ad}) using Jacobi spectral collocation (JSC) method combined with the trapezoidal rule (Crank-Nicolson method) in the one dimensional domain $ \Omega: a\leq x \leq b$ subject to the Dirichlet boundary conditions as: \begin{equation} \label{bound levyfeller ad}
 u(a,t)=0, \ u(b,t)=0 ,
\end{equation} 
More precisely, implementing JSC method to the spatial variable of the fractional advection-dispersion
equation and using the boundary conditions reduces the problem to solving a
system of ordinary differential equations with respect to the time variable. Then this system will be solved using the trapezoidal rule  to reduce the problem to solve system of algebraic equations which are far easier to be solved. This is a generalization of the previous authors' work in \cite{N. H. Muner}. 

Indeed, there is a little work in the literature for solving numerically fractional differential equations when the Riesz-Feller operator is used to describe the fractional derivatives (see \cite{H. Zhang}, \cite{M.Ci4}, \cite{M.Ci42}, \cite{N. H. Tuan} and \cite{Q. Liu}, most of them used FDM). We used in the previous work \cite{N. H. Muner} Chebyshev-Legendre collocation method with first-order Euler method for solving L\'{e}vy-Feller diffusion equation. To the best of our knowledge, there is no paper used spectral method for solving L\'{e}vy-Feller advection-dispersion equation, and this motivated our interest in such method.

This article is organized as follows: In the following section, we will write down some definitions on fractional calculus and give some relevant properties of Jacobi polynomials. In Section 3, we suggest and prove an explicit formula corresponding to Riesz-Feller fractional derivative of Jacobi polynomials. In Section 4 we applied JSC method to solve (\ref{LVAd-RF}, \ref{init levyfeller ad}, \ref{bound levyfeller ad}) on $ \Omega$ and change them to a system of ordinary differential equations which will be solved using the trapezoidal rule. Section 5, reports some numerical results to show the accuracy and the applicability of the proposed method. Finally, in Section 6 some conclusions are given.

\section{Definitions and fundamentals}
\qquad Here, we introduce some necessary definitions and mathematical preliminaries of the fractional derivative theory.
\subsection{Some properties of fractional calculus}

\begin{defn}\label{difofrf}  
	
\ \ For $0 < \alpha < 2$, $\alpha \neq1$ and $|\theta|  \leq min \{\alpha, 2 - \alpha\}$, the Riesz-Feller fractional operator $D^{\alpha}_{\theta}$ represents in the following form (see e.g.  \cite{B.Al}, \cite{H. Zhang}, \cite{M.Ci4}, \cite{M.Ci42})

	\begin{equation}\label{R-F-2}
	D^{\alpha}_{\theta}f(x)=-(c_+D^{\alpha}_{+}+c_-D^{\alpha}_-)f(x),
	\end{equation}
	where the coefficients $c_\pm$ are given by
	\begin{equation}\label{c+-}
	c_+=c_+(\alpha, \theta)=\frac{sin((\alpha-\theta)\pi/2)}{sin(\alpha\pi)},\ \
	c_-=c_-(\alpha, \theta)=\frac{sin((\alpha+\theta)\pi/2)}{sin(\alpha\pi)},
	\end{equation}
	and
	\begin{equation}\label{D+-}
	(D^{\alpha}_{+}f)(x)=(\frac{d}{dx})^n(I^{n-\alpha}_{+}f)(x),\ \
	(D^{\alpha}_{-}f)(x)=(-\frac{d}{dx})^n(I^{n-\alpha}_{-}f)(x),
	\end{equation}
	are the left-side and right-side Riemann-Liouville fractional derivatives with $x\in\mathbb{R}$ and $\alpha>0, \ n-1<\alpha\leq n, \ n=1,\ 2$. 
\end{defn}
	In expressions (\ref{D+-}) the fractional operators $I^{n-\alpha}_{\pm}$ are defined as the left-and right-side of Weyl fractional integrals, which given by the following definition: 
	\begin{defn} \ For $\alpha>0,$
	\begin{equation}\label{wely}
	(I^{\alpha}_{+}f)(x)=\frac{1}{\Gamma(\alpha)}\int^{x}_{-\infty}\frac{f(\xi)}{(x-\xi)^{1-\alpha}}d\xi,\ \
	(I^{\alpha}_{-}f)(x)=\frac{1}{\Gamma(\alpha)}\int^{+\infty}_{x}\frac{f(\xi)}{(\xi-x)^{1-\alpha}}d\xi.
	\end{equation}
\end{defn}
	For $\alpha=1$, the representation (\ref{R-F-2}) is not valid and has to be replaced by the formula
	\begin{equation}
	D^{1}_{\theta}f(x)=[cos(\theta \pi /2)D^{1}_{0}-sin(\theta \pi /2)D]f(x),
	\end{equation}
	where the operator $D_0^1 $ is related to the Hilbert transform as first noted by Feller in 1952 in his pioneering paper \cite{feller}
	$$ D_0^1=\frac{1}{\pi}\frac{d}{dx}\int _{-\infty}^{+\infty}\frac{f(\xi)}{x-\xi}d\xi,$$ and $D$ refers for the first standard derivative.\\
	
   \hspace{-.9cm} For $\alpha=2$\  (surely, $\theta=0$), 	$D^{\alpha}_{\theta}f(x)=\frac{d^2f(x)}{dx^2}.$

From definition (\ref{difofrf}), we see the Riesz-Feller fractional derivative is a linear combination of left-side and right-side Riemann-Liouville fractional derivatives, so: $$	D^{\alpha}_{\theta}(\lambda f(x) +\gamma g(x))=\lambda D^{\alpha}_{\theta} f(x) +\gamma D^{\alpha}_{\theta}g(x).$$
We also recall from \cite{Podlubny} a useful property of the left-side and right-side Riemann-Liouville fractional derivatives. Assume that $x\in [a,b], \ a,b \in \mathbb{R}$ such that $f(a)=f(b)=0$ then for $0<p,q\leq 1$ we have 
\begin{equation}\label{int-derv}
_aD^{p}_{x}\ _aI^{p}_xf(x)=f(x),\qquad _xD^{p}_{b} \ _xI^p_bf(x)=f(x).
\end{equation}
Also, for Riesz-Feller fractional derivative we have:

\subsection{Some properties of Jacobi polynomials}
We introduce in this section some basic properties of (shifted) Jacobi polynomials ($J_k^{\beta,\gamma}(x) ,\ \  k\geqslant 0,\   \beta>-1, \ \ \gamma>-1$) that are most relevant to the proposed spectral collocation approximations (\cite{T.S. Chihara}, \cite{R. Koekoek}, \cite{M. E. H. Ismail}, \cite{A. F. Nikiforov}).

Jacobi polynomials may be archived from the recurrence relation:
$$J_{k+1}^{\beta,\gamma}(x)=(a_k^{\beta,\gamma }x-b_k^{\beta,\gamma})J_k^{\beta,\gamma}(x)-c_k^{\beta
	,\gamma}J_{k-1}^{\beta,\gamma}(x), k=1, \ 2,\ ...,$$
 $$ J_0^{\beta,\gamma}(x)=1,\qquad J_1^{\beta,\gamma}(x)=\frac{(\beta+\gamma+2)x+\beta-\gamma}{2},$$
  where $$ a_k^{\beta,\gamma}=\frac{(2k+\beta+\gamma+1)(2k+\beta+\gamma+2)}{2(k+1)(k+\beta+\gamma+1)},$$
    $$ b_k^{\beta,\gamma}=\frac{(2k+\beta+\gamma+1)(\gamma^2-\beta^2)}{2(k+1)(k+\beta+\gamma+1)(2k+\beta+\gamma)},$$
        $$ c_k^{\beta,\gamma}=\frac{(k+\beta)(k+\gamma)(2k+\beta+\gamma+2)}{(k+1)(k+\beta+\gamma+1)(2k+\beta+\gamma)}.$$ 
 We mention here some of the most important properties of Jacobi polynomials       
  \begin{equation}\label{pro-jacobi-1}
J_k^{\beta,\gamma}(-x)=(-1)^kJ_k^{\gamma,\beta}(x),\qquad J_k^{\beta,\gamma}(1)=\frac{\Gamma(k+\beta+1)}{k!\Gamma(\beta+1)},
  \end{equation}
 \begin{equation}\label{pro-jacobi-2}\frac{d^m}{dx^m}J_k^{\beta,\gamma}(x)=\frac{\Gamma(m+k+\beta+\gamma+1)}{2^m\Gamma(k+\beta+\gamma+1)}J_{k-m}^{\beta+m,\gamma+m}(x).\end{equation}
  A basic property of the Jacobi polynomials is that they are the eigenfunctions of the
  singular Sturm-Liouville problem:
  $$(1-x^2)\phi''(x)+[\gamma
  -\beta+(\beta+\gamma+2)x]\phi'(x)+k(k+\beta+\gamma+1)\phi(x) = 0.$$

In order to use these polynomials on the interval $[0, L],\ L>0,$ we recall here the shifted Jacobi polynomials  $J_{L,k}^{\beta,\gamma}(x)=J_k^{\beta,\gamma}(\frac{2x-L}{L}).$\\
The analytic form of the shifted Jacobi polynomials $J_{L,k}^{\beta,\gamma}(x)$ of degree $k$ ($k$\ integer) is given by
\begin{equation}\label{shifJacob1}
J_{L,k}^{\beta,\gamma}(x)=\sum_{i=0}^{k}(-1)^{k-i}\frac{\Gamma(k+\gamma+1)\Gamma(k+i+\beta+\gamma+1)}{\Gamma(i+\gamma+1)\Gamma(k+\beta+\gamma+1)(k-i)!i!L^i}x^i,
\end{equation}
or,
\begin{equation}\label{shifJacob2}
J_{L,k}^{\beta,\gamma}(x)=\sum_{i=0}^{k}\frac{\Gamma(k+\gamma+1)\Gamma(k+i+\beta+\gamma+1)}{\Gamma(i+\beta+1)\Gamma(k+\beta+\gamma+1)(k-i)!i!L^i}(x-L)^i,
\end{equation}
where $$J_{L,k}^{\beta,\gamma}(0)=(-1)^k\frac{\Gamma(k+\gamma+1)}{\Gamma(\gamma+1)k!},$$ and
$$J_{L,k}^{\beta,\gamma}(L)=\frac{\Gamma(k+\beta+1)}{\Gamma(\beta+1)k!}.$$
The orthogonality condition of shifted Jacobi polynomials is
$$ \int_{0}^{L}J_{L,k}^{\beta,\gamma}(x)J_{L,j}^{\beta,\gamma}(x)w_L^{\beta,\gamma}(x)dx=h_k,$$ where
$$w_L^{\beta,\gamma}(x)=x^{\gamma}(L-x)^{\beta} \ \ \text{and}\ h_k=\begin{cases}
\frac{L^{\beta+\gamma+1}\Gamma(k+\beta+1)\Gamma(k+\gamma+1)}{(2k+\beta+\gamma+1)k!\Gamma(k+\beta+\gamma+1)},\ \ k=j,\\
0, \qquad\qquad\qquad\qquad\qquad \ k\neq j.
\end{cases}$$

The expansion of $x^i$ and $(x-L)^i$ in terms of shifted Jacobi polynomials are given, respectively, by:

$$x^i=\frac{(\gamma+1)_i}{(\beta+\gamma+2)_i}\sum_{k=0}^{i}\frac{(-1)^kL^i(-i)_k(\beta+\gamma+2k+1)(\beta+\gamma+2)_{k-1}}{(1+\gamma)_k(\beta+\gamma+i+2)_k}J_{L,k}^{\beta,\gamma}(x),$$

$$(x-L)^i=\frac{(\beta+1)_i}{(\beta+\gamma+2)_i}\sum_{k=0}^{i}\frac{L^i(-i)_k(\beta+\gamma+2k+1)(\beta+\gamma+2)_{k-1}}{(1+\beta)_k(\beta+\gamma+i+2)_k}J_{L,k}^{\beta,\gamma}(x),$$
where $(.)_k$ is Pochhammer's symbol.

Assume $f(t)\in L^2_{w_L^{\beta,\gamma}(x)}(0,L)$, then it can be expanded by means of the shifted Jacobi polynomials as the following form \cite{E. Godoy}:
\begin{equation}
f(x)=\sum_{j=0}^{\infty}c_jJ_{L,j}^{\beta,\gamma}(x),
\end{equation}
where  $$c_j=\frac{1}{h_k}\int_{0}^{L}w_L^{\beta,\gamma}(x)f(x)J_{L,j}^{\beta,\gamma}(x) dx,\ \ \ \ j=0,1,2,\cdots.$$
If we approximate $f(x)$ by the first $M$ term, then we can write
\begin{equation}
f(x)=\sum_{j=0}^{M}c_jJ_{L,j}^{\beta,\gamma}(x).
\end{equation}
We mention here that Chebyshev, Legendre, and ultraspherical polynomials are particular cases of the Jacobi polynomials.

\section{Riesz-Feller fractional derivative of shifted Jacobi polynomials}

For $1<\alpha<2$, depending on definition of Riemann-Liouville fractional derivatives on $[0,L]$,  
\begin{equation} \label{left-deri-x}
_{0}D^{\alpha}_x(x)^k=\frac{\Gamma(k+1)}{\Gamma(k-p+1)}(x)^{k-\alpha},\ \ \ k>-1,
\end{equation}
\begin{equation} \label{left-deri-x-L}
_{x}D^{\alpha}_L(x-L)^k=\frac{(-1)^k\Gamma(k+1)}{\Gamma(k-\alpha+1)}(L-x)^{k-\alpha},\ \ \ k>-1,
\end{equation}

\begin{thm}\label{th-der jacobi left}
The analytic form of the left-side Riemann-Liouville fractional derivative of the shifted Jacobi polynomial on $[0,L]$ is given by:
\begin{eqnarray}
_{0}D_{x}^{\alpha}J_{L,j}^{\beta,\gamma}(x)=\sum_{k=0}^{j}\sum_{i=0}^{k}\Theta_{i,j,k}^{\alpha,\beta,\gamma}\times\Upsilon_{i,k}^{\alpha,\beta,\gamma}\times x^{-\alpha}\times J_{L,j}^{\beta,\gamma}(x),
\end{eqnarray}
where,
\begin{eqnarray}
\Theta_{i,j,k}^{\alpha,\beta,\gamma}&=&\frac{(-1)^{(i+j+k)}\Gamma(1+\beta+\gamma+j+k)\Gamma(1+\gamma+j)\Gamma(1+k)}{\Gamma(1+\beta+\gamma+j)\Gamma(1+\gamma+k)\Gamma(1+k-\alpha)(j-k)!k!},\nonumber\\
\Upsilon_{i,k}^{\alpha,\beta,\gamma}&=&\frac{(-k)_i(1+\gamma)_k(2+\beta+\gamma)_{i-1}(1+\beta+\gamma+2i)}{(1+\gamma)_i(2+\beta+\gamma)_k(2+k+\beta+\gamma)_i}.
\end{eqnarray}
\end{thm}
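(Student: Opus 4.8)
The plan is to prove the identity by a direct computation: expand $J_{L,j}^{\beta,\gamma}(x)$ as a polynomial, differentiate termwise, and re-expand the result in the shifted Jacobi basis. First I would invoke the analytic form (\ref{shifJacob1}) to write $J_{L,j}^{\beta,\gamma}(x)=\sum_{k=0}^{j}a_{j,k}^{\beta,\gamma}\,x^{k}$, where $a_{j,k}^{\beta,\gamma}=(-1)^{j-k}\dfrac{\Gamma(j+\gamma+1)\Gamma(j+k+\beta+\gamma+1)}{\Gamma(k+\gamma+1)\Gamma(j+\beta+\gamma+1)(j-k)!\,k!\,L^{k}}$. Because ${}_{0}D^{\alpha}_{x}$ is linear and each exponent $k$ is a nonnegative integer with $1<\alpha<2$, the power rule (\ref{left-deri-x}) applies to every summand, giving ${}_{0}D_{x}^{\alpha}J_{L,j}^{\beta,\gamma}(x)=\sum_{k=0}^{j}a_{j,k}^{\beta,\gamma}\dfrac{\Gamma(k+1)}{\Gamma(k-\alpha+1)}\,x^{k-\alpha}$. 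This already expresses the fractional derivative as a finite linear combination of the powers $x^{k-\alpha}$, $0\le k\le j$.

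Next I would factor out $x^{-\alpha}$ from every term and substitute, for the remaining $x^{k}$, the expansion of $x^{k}$ in the shifted Jacobi basis recalled above, namely $x^{k}=\dfrac{(\gamma+1)_{k}}{(\beta+\gamma+2)_{k}}\sum_{i=0}^{k}\dfrac{(-1)^{i}L^{k}(-k)_{i}(\beta+\gamma+2i+1)(\beta+\gamma+2)_{i-1}}{(1+\gamma)_{i}(\beta+\gamma+k+2)_{i}}J_{L,i}^{\beta,\gamma}(x)$. Interchanging the two finite summations produces precisely the double sum $\sum_{k=0}^{j}\sum_{i=0}^{k}$ of the statement, each term carrying the factor $x^{-\alpha}$ in front of a shifted Jacobi polynomial. (Here the polynomial displayed on the right should carry the inner summation index, i.e.\ read $J_{L,i}^{\beta,\gamma}(x)$ rather than $J_{L,j}^{\beta,\gamma}(x)$.)

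The remainder is bookkeeping. For fixed $(i,j,k)$ I would multiply the three contributing coefficients --- $a_{j,k}^{\beta,\gamma}$ from (\ref{shifJacob1}), the factor $\Gamma(k+1)/\Gamma(k-\alpha+1)$ from the power rule, and the coefficient of $J_{L,i}^{\beta,\gamma}$ in the expansion of $x^{k}$ --- and verify that their product equals $\Theta_{i,j,k}^{\alpha,\beta,\gamma}\,\Upsilon_{i,k}^{\alpha,\beta,\gamma}$. The two factors $L^{k}$ and $L^{-k}$ cancel; the sign $(-1)^{j-k}$ combines with $(-1)^{i}$ to give $(-1)^{i+j+k}$; the Gamma-function part collapses exactly to $\Theta_{i,j,k}^{\alpha,\beta,\gamma}$, and writing every Pochhammer symbol as $(a)_{m}=\Gamma(a+m)/\Gamma(a)$ matches the rest with $\Upsilon_{i,k}^{\alpha,\beta,\gamma}$. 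The main (mild) obstacle is simply keeping the factorial and Pochhammer algebra straight, in particular handling the boundary case $i=0$ where $(\beta+\gamma+2)_{-1}=1/(\beta+\gamma+1)$ must be used consistently; no analytic difficulty arises since all sums are finite and no interchange of limits is involved. If one prefers not to quote $a_{j,k}^{\beta,\gamma}$, it can be recovered by a short induction from the three-term recurrence for $J_{k}^{\beta,\gamma}$ together with the substitution $x\mapsto(2x-L)/L$.
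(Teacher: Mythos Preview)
Your proposal is correct and follows exactly the approach the paper indicates: it cites \cite{Bhrawyzakyjacobi} and notes that the proof proceeds by linearity of the Riemann--Liouville operator, the power rule (\ref{left-deri-x}), and the expansion of $x^{k}$ in shifted Jacobi polynomials, which is precisely the three-step computation you carry out. Your parenthetical observation that the polynomial on the right should read $J_{L,i}^{\beta,\gamma}(x)$ rather than $J_{L,j}^{\beta,\gamma}(x)$ is also well taken.
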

\begin{proof} See \cite{Bhrawyzakyjacobi}. 
The proof was driven depending on linearity of Riemann-Liouville fractional operator, relation (\ref{left-deri-x}) and the expansion of $x^k$ in terms of shifted Jacobi polynomials.
\end{proof}

\begin{thm}\label{th-der jacobi right}
	The analytic form of the right-side Riemann-Liouville fractional derivative of the shifted Jacobi polynomial on $[0,L]$ is given by:
		\begin{eqnarray}
		_xD_{L}^{\alpha}J_{L,j}^{\beta,\gamma}(x)=\sum_{k=0}^{j}\sum_{i=0}^{k}\overline{\Theta}_{j,k}^{\alpha,\beta,\gamma}\times\overline{\Upsilon}_{i,k}^{\beta,\gamma}\times (L-x)^{-\alpha}\times J_{L,j}^{\beta,\gamma}(x),
		\end{eqnarray}
		where,
		\begin{eqnarray}
		\overline{\Theta}_{j,k}^{\alpha,\beta,\gamma}&=&\frac{(-1)^{(k)}\Gamma(1+\beta+\gamma+j+k)\Gamma(1+\beta+j)\Gamma(1+k)}{\Gamma(1+\beta+\gamma+j)\Gamma(1+\beta+k)\Gamma(1+k-\alpha)(j-k)!k!},\nonumber\\
		\overline{\Upsilon}_{i,k}^{\beta,\gamma}&=&\frac{(-k)_i(1+\beta)_k(2+\beta+\gamma)_{i-1}(1+\beta+\gamma+2i)}{(1+\beta)_i(2+\beta+\gamma)_k(2+k+\beta+\gamma)_i}.
		\end{eqnarray}
			
		\end{thm}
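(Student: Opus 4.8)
The plan is to mimic the proof of Theorem~\ref{th-der jacobi left} (cf.\ \cite{Bhrawyzakyjacobi}), with the two endpoints of $[0,L]$ interchanged: the ingredients are the linearity of the right-side Riemann--Liouville operator, the monomial rule (\ref{left-deri-x-L}), and the expansion of a power of $(x-L)$ in the shifted Jacobi basis. First I would write $J_{L,j}^{\beta,\gamma}(x)$ in its analytic form in powers of $(x-L)$, i.e.\ (\ref{shifJacob2}), and apply $_xD^{\alpha}_{L}$ term by term, which is legitimate by linearity. Each monomial is then handled by (\ref{left-deri-x-L}), $_xD^{\alpha}_{L}(x-L)^{i}=\frac{(-1)^{i}\Gamma(i+1)}{\Gamma(i-\alpha+1)}(L-x)^{i-\alpha}$ for $0\le i\le j$, so that $_xD^{\alpha}_{L}J_{L,j}^{\beta,\gamma}(x)$ becomes an explicit finite combination of the functions $(L-x)^{i-\alpha}$.

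Next I would pull out the common factor $(L-x)^{-\alpha}$, rewrite $(L-x)^{i}=(-1)^{i}(x-L)^{i}$ (so that the two powers of $-1$ cancel the one coming from the monomial rule), and re-expand each surviving power $(x-L)^{i}$ in the shifted Jacobi polynomials $J_{L,k}^{\beta,\gamma}$, $0\le k\le i$, using the expansion of $(x-L)^{i}$ recorded in Section~2. Gathering, in front of a fixed shifted Jacobi polynomial, all the Gamma- and Pochhammer-factors contributed by (\ref{shifJacob2}), by the monomial rule, and by the Jacobi expansion, and simplifying via $\Gamma(i+1)=i!$ together with the cancellation of the powers $L^{\pm i}$, one checks that the resulting coefficient contracts to $\overline{\Theta}_{j,k}^{\alpha,\beta,\gamma}\,\overline{\Upsilon}_{i,k}^{\beta,\gamma}$ once the summation indices are matched to those in the statement.

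A slicker alternative I would keep in mind is to derive Theorem~\ref{th-der jacobi right} directly from Theorem~\ref{th-der jacobi left} by the reflection $x\mapsto L-x$: under this substitution $_xD^{\alpha}_{L}$ becomes $_0D^{\alpha}_{x}$ acting on the reflected function, while the symmetry relation $J_{k}^{\beta,\gamma}(-x)=(-1)^{k}J_{k}^{\gamma,\beta}(x)$ of (\ref{pro-jacobi-1}) gives $J_{L,j}^{\beta,\gamma}(L-x)=(-1)^{j}J_{L,j}^{\gamma,\beta}(x)$. Applying Theorem~\ref{th-der jacobi left} with $\beta$ and $\gamma$ interchanged and substituting back $y=L-x$ (using $y^{-\alpha}=(L-x)^{-\alpha}$ and $J_{L,k}^{\gamma,\beta}(L-x)=(-1)^{k}J_{L,k}^{\beta,\gamma}(x)$) should reproduce the claimed formula; here one uses that $\beta+\gamma$ is symmetric, so that $\overline{\Upsilon}_{i,k}^{\beta,\gamma}$ is precisely $\Upsilon_{i,k}^{\alpha,\gamma,\beta}$ and $\overline{\Theta}_{j,k}^{\alpha,\beta,\gamma}$ is $\Theta_{i,j,k}^{\alpha,\gamma,\beta}$ up to the reflection signs.

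I expect the main obstacle to be purely computational: keeping track of the signs $(-1)^{i},(-1)^{j},(-1)^{k}$ and of the Pochhammer shifts carefully enough that the double-indexed coefficient collapses to the advertised product form, and aligning the two summation indices with those used in the statement. A secondary point of care is the low-order indices $i=0,1$ (for which $i-\alpha<0$), where (\ref{left-deri-x-L}) is to be read in the formal Weyl/Riemann--Liouville sense, exactly as in Theorem~\ref{th-der jacobi left}. No analytic input beyond linearity, the analytic form (\ref{shifJacob2}), the monomial rule (\ref{left-deri-x-L}), and the shifted-Jacobi expansion of $(x-L)^{i}$ is needed.
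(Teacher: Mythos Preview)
Your primary approach is exactly the one the paper indicates: it cites \cite{Bhrawyzakyjacobi} and notes that the proof relies on linearity of the right-side Riemann--Liouville operator, the monomial rule (\ref{left-deri-x-L}), and the expansion of $(x-L)^k$ in the shifted Jacobi basis, which is precisely the sequence of steps you outline. Your reflection argument via $x\mapsto L-x$ and (\ref{pro-jacobi-1}) is a nice alternative not mentioned in the paper, but your main line already matches.
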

		\begin{proof}See \cite{Bhrawyzakyjacobi}. 
The proof was driven depending on linearity of Riemann-Liouville fractional operator, relation (\ref{left-deri-x-L}) and the expansion of $(x-L)^k$ in terms of shifted Jacobi polynomials.
		\end{proof}
\begin{thm}\label{th-der jacobi rieszfeller}
	The analytic form of the Riesz-Feller fractional derivative of the shifted Jacobi polynomial on $[0,L]$ is given by:
\end{thm}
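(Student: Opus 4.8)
The plan is to read off the formula from the defining relation \eqref{R-F-2} of the Riesz-Feller operator, feeding in the two closed forms already established in Theorems~\ref{th-der jacobi left} and~\ref{th-der jacobi right}. First I would fix the interpretation of the one-sided derivatives on the working interval: on $[0,L]$ we use the finite-terminal Riemann--Liouville derivatives ${}_0D^\alpha_x$ and ${}_xD^\alpha_L$ in place of the Weyl derivatives $D^\alpha_\pm$ of Definition~\ref{difofrf}, consistently with the conventions adopted throughout Section~3 (cf. \eqref{int-derv}, \eqref{left-deri-x}, \eqref{left-deri-x-L}). With that identification, the linearity of $D^\alpha_\theta$ noted right after Definition~\ref{difofrf} gives, for $1<\alpha<2$,
\begin{equation}
D^\alpha_\theta J_{L,j}^{\beta,\gamma}(x)= -\,c_+\,{}_0D^\alpha_x J_{L,j}^{\beta,\gamma}(x)\;-\;c_-\,{}_xD^\alpha_L J_{L,j}^{\beta,\gamma}(x),
\end{equation}
with $c_\pm=c_\pm(\alpha,\theta)$ as in \eqref{c+-}.

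The second step is a pure substitution. I would insert the expansion of Theorem~\ref{th-der jacobi left} for ${}_0D^\alpha_x J_{L,j}^{\beta,\gamma}$ and that of Theorem~\ref{th-der jacobi right} for ${}_xD^\alpha_L J_{L,j}^{\beta,\gamma}$ into the displayed identity. The outcome is a sum of two double sums over $0\le k\le j$, $0\le i\le k$, both multiplying $J_{L,j}^{\beta,\gamma}(x)$: the first has coefficients $-c_+\,\Theta_{i,j,k}^{\alpha,\beta,\gamma}\,\Upsilon_{i,k}^{\alpha,\beta,\gamma}$ and weight $x^{-\alpha}$, the second has coefficients $-c_-\,\overline{\Theta}_{j,k}^{\alpha,\beta,\gamma}\,\overline{\Upsilon}_{i,k}^{\beta,\gamma}$ and weight $(L-x)^{-\alpha}$. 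Finally I would substitute the explicit trigonometric values of $c_\pm$ from \eqref{c+-} to arrive at the fully explicit statement; the remaining work, collecting the $\Gamma$-factors and Pochhammer symbols of the two expansions, is routine bookkeeping.

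The one step that genuinely needs care --- and which I regard as the main obstacle --- is the reduction in the first paragraph: the Riesz-Feller operator is originally defined on all of $\mathbb{R}$ through Weyl integrals with terminals at $\mp\infty$, whereas Theorems~\ref{th-der jacobi left}--\ref{th-der jacobi right} are stated for the Riemann--Liouville derivatives with terminals at $0$ and $L$. I would therefore make explicit that the object being computed is the action of the operator obtained by this standard finite-interval truncation (equivalently, treating $J_{L,j}^{\beta,\gamma}$ as extended by zero outside $[0,L]$, together with the endpoint behaviour recorded near \eqref{int-derv}), so that the substitution of the two theorems is legitimate. It is also worth noting that, exactly as in Definition~\ref{difofrf}, the formula holds for $1<\alpha<2$ only; the exceptional value $\alpha=1$ (Hilbert-transform term) and the classical case $\alpha=2$ would be handled separately.
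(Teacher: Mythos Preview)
Your proposal is correct and follows exactly the paper's approach: apply \eqref{R-F-2} with the finite-interval Riemann--Liouville derivatives, substitute the expansions of Theorems~\ref{th-der jacobi left} and~\ref{th-der jacobi right}, and collect the two double sums into a single one with coefficient $\Psi_{i,j,k}^{\alpha,\beta,\gamma}$. Your explicit discussion of the Weyl-to-finite-terminal reduction is in fact more careful than the paper, which performs the substitution without comment.
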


	\begin{eqnarray}
	D^{\alpha}_{\theta}J_{L,j}^{\beta,\gamma}(x)
	=&&-(c_+\ _0D^{\alpha}_{x}J_{L,j}^{\beta,\gamma}(x)+c_-\ _{x}D^{\alpha}_LJ_{L,j}^{\beta,\gamma}(x)), \nonumber\\
	=&&-\big[c_+\ \sum_{k=0}^{j}\sum_{i=0}^{k}\Theta_{i,j,k}^{\alpha,\beta,\gamma}\times\Upsilon_{i,k}^{\alpha,\beta,\gamma}\times x^{-\alpha}\times J_{L,j}^{\beta,\gamma}(x)\nonumber\\
	&&+c_-\ \sum_{k=0}^{j}\sum_{i=0}^{k}\overline{\Theta}_{j,k}^{\alpha,\beta,\gamma}\times\overline{\Upsilon}_{i,k}^{\beta,\gamma}\times (L-x)^{-\alpha}\times J_{L,j}^{\beta,\gamma}(x)\big], \nonumber\\
		=&&-\sum_{k=0}^{j}\sum_{i=0}^{k}[c_+\ \Theta_{i,j,k}^{\alpha,\beta,\gamma}\times\Upsilon_{i,k}^{\alpha,\beta,\gamma}\times x^{-\alpha}+c_-\ \overline{\Theta}_{j,k}^{\alpha,\beta,\gamma}\times\overline{\Upsilon}_{i,k}^{\beta,\gamma}\times (L-x)^{-\alpha}]J_{L,j}^{\beta,\gamma}(x),\nonumber\\
		=&&\sum_{k=0}^{j}\sum_{i=0}^{k}\Psi_{i,j,k}^{\alpha,\beta,\gamma}\times J_{L,j}^{\beta,\gamma}(x).
	\end{eqnarray}
Where \begin{equation}
\Psi_{i,j,k}^{\alpha,\beta,\gamma}=-[c_+\ \Theta_{i,j,k}^{\alpha,\beta,\gamma}\times\Upsilon_{i,k}^{\alpha,\beta,\gamma}\times x^{-\alpha}+c_-\ \overline{\Theta}_{j,k}^{\alpha,\beta,\gamma}\times\overline{\Upsilon}_{i,k}^{\beta,\gamma}\times (L-x)^{-\alpha}].
\end{equation}

\section{Proceedings of solution for the L\'{e}vy-Feller advection-dispersion equation}
In this section, we will introduce numerical algorithm for approximating the solution of the following L\'{e}vy-Feller advection-dispersion equation:
\begin{equation}\label{levy feller adv sub int}\begin{cases}
\frac{\partial u(x,t)}{\partial t}=dD^{\alpha}_{\theta}u(x,t)-e\frac{\partial u(x,t)}{\partial x}+s(x,t), \ \ \ \ t>0, \ \ 0< x <L, \ \ 1< \alpha \leq2,\\
u(0,t)=0, \qquad \ u(L,t)=0 ,  \ \ \ \ t>0,\\
u(x,0)=f(x), \qquad  \ 0\leqslant x \leqslant L,
\end{cases}
\end{equation}
assuming $u(x,t)=0$ for $x\in \mathbb{R}\backslash [0,L]$.\\
In order to use Jacobi spectral collocation method, we approximate $u(x, t)$ as:
\begin{equation}\label{appr of u}
u_m(x,t)=\sum_{j=0}^{m}u_j(t)J_{L,j}^{\beta,\gamma}(x).
\end{equation}
Depending on properties of Riesz-Feller fractional derivatives we can write
\begin{eqnarray}
D^{\alpha}_{\theta}u_m(x,t)=&&\sum_{j=0}^{m}u_j(t)	D^{\alpha}_{\theta}J_{L,j}^{\beta,\gamma}(x),\nonumber\\
=&& \sum_{j=0}^{m}u_j(t)\sum_{k=0}^{j}\sum_{i=0}^{k}\Psi_{i,j,k}^{\alpha,\beta,\gamma}\times J_{L,j}^{\beta,\gamma}(x), \nonumber\\
=&&\sum_{j=0}^{m}\sum_{k=0}^{j}\sum_{i=0}^{k}u_j(t)\times \Psi_{i,j,k}^{\alpha,\beta,\gamma}\times J_{L,j}^{\beta,\gamma}(x).\nonumber
\end{eqnarray}

So Eq. (\ref{levy feller adv sub int}) take the following form:

\begin{equation}\label{levy adv jacob2}
\sum_{j=0}^{m}\frac{d u_j(t)}{d t}J_{L,j}^{\beta,\gamma}(x)=d\sum_{j=0}^{m}\sum_{k=0}^{j}\sum_{i=0}^{k}u_j(t)\times \Psi_{i,j,k}^{\alpha,\beta,\gamma}\times J_{L,j}^{\beta,\gamma}(x)-e\sum_{j=0}^{m}u_j(t)\frac{d}{dx}J_{L,j}^{\beta,\gamma}(x)+s(x,t).
\end{equation}

 We collocate Eq. (\ref{levy adv jacob2}) at $(m-1)$ points $x_q, \ \ q=1\ ,2\ ,...,\ m-1,\ \ (a<x_q<b)$ as follows:
\begin{equation}\label{levy adv jacob2-2}
\sum_{j=0}^{m}\frac{d u_j(t)}{d t}J_{L,j}^{\beta,\gamma}(x_q)=d\sum_{j=0}^{m}\sum_{k=0}^{j}\sum_{i=0}^{k}u_j(t)\times \Psi_{i,j,k}^{\alpha,\beta,\gamma}\big{|}_{x=x_q}\times J_{L,j}^{\beta,\gamma}(x_q)-e\sum_{j=0}^{m}u_j(t)\frac{d}{dx}J_{L,j}^{\beta,\gamma}(x)\big{|}_{x=x_q}+s(x_q,t).
\end{equation}

Substituting Eq. (\ref{appr of u}) in the initial condition gives us
the constants $u_i$ in the initial case at $t=0$ and substituting
by the same equation in the boundary conditions will give two equations as follows:
\begin{equation}\label{bound-at point}
\sum_{j=0}^{m}(-1)^j\frac{\Gamma(j+\gamma+1)}{\Gamma(\gamma+1)j!}u_j(t)=0,\qquad\sum_{j=0}^{m}\frac{\Gamma(j+\beta+1)}{\Gamma(\beta+1)j!}u_j(t)=0,\qquad
\end{equation}
Equations (\ref{levy adv jacob2-2}) and (\ref{bound-at point}) constitute system of
$(m + 1)$ ordinary differential equations in the unknown $u_j,\ j = 0, 1, ...,m.$ This system will be solved using the trapezoidal rule (which is implicit, second-order and stable method) as described in the following:\\
Let  $0<t_n<T_{final}$ and suppose $\triangle t=\frac{T_{final}}{N} , \ \ t_n=n\triangle t, \ for\  n=0,1,2,...,N,$ then we have the following algebraic system:

\begin{equation}\label{system alg levy adv jac}\begin{cases}
\underset{j=0}{\overset{m}{\ \sum}}	(-1)^{^j}\frac{\Gamma(j+\gamma+1)}{\Gamma(\gamma+1)j!}u_j^n=0,\\

\underset{j=0}{\overset{m}{\ \sum}}\frac{u_j^n-u_j^{n-1}}{\triangle t}J_{L,j}^{\beta,\gamma}(x_q)=\frac{1}{2}\left[\left(d\underset{j=0}{\overset{m}{\sum}}\ \underset{k=0}{\overset{j}{\sum}}\ \underset{i=0}{\overset{k}{\sum}}u_j^n\times \Psi_{i,j,k}^{\alpha,\beta,\gamma}\big{|}_{x=x_q}\times J_{L,j}^{\beta,\gamma}(x_q)\right.\right.\\
\left.\left.\qquad\qquad\qquad\qquad\qquad\qquad\qquad\qquad\qquad\qquad-e\underset{j=0}{\overset{m}{\sum}}u_j^n\frac{d}{dx}J_{L,j}^{\beta,\gamma}(x)\big{|}_{x=x_q}+s_q^n\right)\right. \\ 
\left. \qquad  \qquad \qquad\qquad\qquad \ \ + \left(d\underset{j=0}{\overset{m}{\sum}}\ \underset{k=0}{\overset{j}{\sum}}\ \underset{i=0}{\overset{k}{\sum}}u_j^{n-1}\times \Psi_{i,j,k}^{\alpha,\beta,\gamma}\big{|}_{x=x_q}\times J_{L,j}^{\beta,\gamma}(x_q)\right.\right.\\
\left.\left.\qquad\qquad\qquad\qquad\qquad\qquad\qquad\qquad\qquad\qquad-e\underset{j=0}{\overset{m}{\sum}}u_j^{n-1}\frac{d}{dx}J_{L,j}^{\beta,\gamma}(x)\big{|}_{x=x_q}+s_q^{n-1}\right)	\right] ,\\
\qquad \qquad \qquad \qquad \qquad \qquad \qquad \qquad  q=1,2, ..., m-1, \\
\underset{i=0}{\overset{m}{\ \sum}}\frac{\Gamma(j+\beta+1)}{\Gamma(\beta+1)j!}u_j^n=0,\\
\end{cases}
\end{equation} 


with the initial conditions: $$\sum_{j=0}^{m}u_j^0J_{L,j,q}^{\beta,\gamma}=f_q,\ \ \ q=0,1,2, ..., m,$$
where $u_j^n=u_j(t_n),\quad J_{L,j,q}^{\beta,\gamma}=J_{L,j}^{\beta,\gamma}(x_q),\quad  s_q^n=s(x_q,t_n)\quad and \quad f_q=f(x_q).$\\
System (\ref{system alg levy adv jac}) can be written in a matrix form as the following:
\begin{equation}\label{final system LV adv}
(\boldsymbol{J_1-A})\boldsymbol{U}^n=(\boldsymbol{J_0+A})\boldsymbol{U}^{n-1}+\frac{1}{2}\triangle t(\boldsymbol{S}^n+\boldsymbol{S}^{n-1}),
\end{equation}
 such that:
  $$\boldsymbol{U}^n=(u_0^n,u_1^n,...,u_m^n)^T,$$
  $$\boldsymbol{S}^n=(0,s_1^n,s_2^n...s_{m-1}^n,0)^T,$$

 $$\boldsymbol{A}=\left(
 \begin{array}{cccccc}
  0      & 0   & 0   & 0  & \cdots      & 0  \\
\Omega_{0,1}^{} &\Omega_{1,1}^{} & \Omega_{2,1}^{} & \Omega_{3,1}^{} & \cdots  & \Omega_{m,1}^{} \\
\Omega_{0,2}^{} &\Omega_{1,2}^{} & \Omega_{2,2}^{} & \Omega_{3,2}^{} & \cdots  & \Omega_{m,2}^{} \\
\Omega_{0,3}^{} &\Omega_{1,3}^{} & \Omega_{2,3}^{} & \Omega_{3,3}^{} & \cdots  & \Omega_{m,3}^{} \\
 \vdots & \vdots & \vdots & \vdots & \ddots& \vdots \\
\Omega_{0,m-1}^{} &\Omega_{1,m-1}^{} & \Omega_{2,m-1}^{} & \Omega_{3,m-1}^{} & \cdots  & \Omega_{m,m-1}^{} \\
 0      & 0   & 0   & 0  & \cdots      & 0  \\
 \end{array}
 \right)_{m+1},
 $$

 $$\boldsymbol{J_1}=\left(
 \begin{array}{cccccc}
\frac{\Gamma(\gamma+1)}{\Gamma(\gamma+1)}      & -\frac{\Gamma(1+\gamma+1)}{\Gamma(\gamma+1)1!}   & \frac{\Gamma(2+\gamma+1)}{\Gamma(\gamma+1)2!}   & -\frac{\Gamma(3+\gamma+1)}{\Gamma(\gamma+1)3!}  & \cdots      & (-1)^m\frac{\Gamma(m+\gamma+1)}{\Gamma(\gamma+1)m!} \\
 J_{L,0,1}^{\beta,\gamma} &J_{L,1,1}^{\beta,\gamma} & J_{L,2,1}^{\beta,\gamma} & J_{L,3,1}^{\beta,\gamma} & \cdots  & J_{L,m,1}^{\beta,\gamma} \\
  J_{L,0,2}^{\beta,\gamma} &J_{L,1,2}^{\beta,\gamma} & J_{L,2,2}^{\beta,\gamma} & J_{L,3,2}^{\beta,\gamma} & \cdots  & J_{L,m,2}^{\beta,\gamma} \\
  J_{L,0,3}^{\beta,\gamma} &J_{L,1,3}^{\beta,\gamma} & J_{L,2,3}^{\beta,\gamma} & J_{L,3,3}^{\beta,\gamma} & \cdots  & J_{L,m,3}^{\beta,\gamma}\\
 \vdots & \vdots & \vdots & \vdots & \ddots& \vdots \\
 J_{L,0,m-1}^{\beta,\gamma} &J_{L,1,m-1}^{\beta,\gamma} & J_{L,2,m-1}^{\beta,\gamma} & J_{L,3,m-1}^{\beta,\gamma} & \cdots  & J_{L,m,m-1}^{\beta,\gamma} \\
  \frac{\Gamma(\beta+1)}{\Gamma(\beta+1)}      & \frac{\Gamma(1+\beta+1)}{\Gamma(\beta+1)1!}   & \frac{\Gamma(2+\beta+1)}{\Gamma(\beta+1)2!}   & \frac{\Gamma(3+\beta+1)}{\Gamma(\beta+1)3!}  & \cdots      & \frac{\Gamma(m+\beta+1)}{\Gamma(\beta+1)m!} \\
 \end{array}
 \right)_{m+1},
 $$
 
  $$\boldsymbol{J_0}=\left(
  \begin{array}{cccccc}
  0      & 0   & 0   & 0  & \cdots      & 0 \\
   J_{L,0,1}^{\beta,\gamma} &J_{L,1,1}^{\beta,\gamma} & J_{L,2,1}^{\beta,\gamma} & J_{L,3,1}^{\beta,\gamma} & \cdots  & J_{L,m,1}^{\beta,\gamma} \\
  J_{L,0,2}^{\beta,\gamma} &J_{L,1,2}^{\beta,\gamma} & J_{L,2,2}^{\beta,\gamma} & J_{L,3,2}^{\beta,\gamma} & \cdots  & J_{L,m,2}^{\beta,\gamma} \\
  J_{L,0,3}^{\beta,\gamma} &J_{L,1,3}^{\beta,\gamma} & J_{L,2,3}^{\beta,\gamma} & J_{L,3,3}^{\beta,\gamma} & \cdots  & J_{L,m,3}^{\beta,\gamma}\\
  \vdots & \vdots & \vdots & \vdots & \ddots& \vdots \\
  J_{L,0,m-1}^{\beta,\gamma} &J_{L,1,m-1}^{\beta,\gamma} & J_{L,2,m-1}^{\beta,\gamma} & J_{L,3,m-1}^{\beta,\gamma} & \cdots  & J_{L,m,m-1}^{\beta,\gamma} \\
  0   & 0   & 0   &0  & \cdots        &0 \\
  \end{array}
  \right)_{m+1},
  $$
 \\
 and
 $$\Omega_{j,q}=\frac{1}{2}(\triangle td\sum_{k=0}^{j}\sum_{i=0}^{k}\times \Psi_{i,j,k}^{\alpha,\beta,\gamma}\big{|}_{x=x_q}\times J_{L,j,q}^{\beta,\gamma}+\triangle t e\frac{d}{dx}J_{L,j}^{\beta,\gamma}(x)\big{|}_{x=x_q}),$$ $$ \ i=0,1,2,...,m,\ \ j=1,2,...,m-1, \ \ n=1,2,...,N.$$
Substitute the computed coefficients $u_j,\ j=0,1,2,...,m,$ and the Jacobi polynomials in Eq. (\ref{appr of u}) give us the approximation solutions $(u)$ of the proposed problem.\\

In this work we use the Jacobi Gauss-Lobatto points which are useful for the stability, convergence
and efficiency of the Jacobi spectral collocation method.


\section{Numerical simulations}
Some test examples are introduced in this section to illustrate the accurcy of the presented method. 
\begin{example}\label{ex1-levy feller-ad}\cite{Q. Liu}	
We consider the following L\'{e}vy-Feller advection-dispersion equation in a bounded
domain:
\begin{equation}
\begin{cases}
\frac{\partial u(x,t)}{\partial t}=dD^{\alpha}_{\theta}u(x,t)-e\frac{\partial u(x,t)}{\partial x}, \ \ \ \ t>0, \ \ 0< x <\pi,\ \ \ 1< \alpha \leq2,\\
u(0,t)=0, \qquad \ u(\pi,t)=0 ,  \ \ \ \ t>0,\\
u(x,0)=sin(x), \qquad  \ 0\leqslant x \leqslant \pi.
\end{cases}
\end{equation}

\end{example}
Let $d=1.5, \ \ e=1,\ \ \alpha=1.7, \ \theta=0.3, \ \ T_{final}=0.3,$ table (\ref{table1LFAd}) lists the numerical results computed by explicit finite difference approximation (EFDA) in \cite{Q. Liu} and the presented scheme in this paper JSC for problem (\ref{ex1-levy feller-ad}).

Let $d=1.5, \ \ e=1,\ \  \alpha=1.7, \ \ T_{final}=0.4 \ \text{and} \ \triangle t= 0.008$, figure (\ref{figure1-LFAd}) shows the obtained numerical results by means of the presented scheme JSC $(m=5)$ for different values of $\theta$, which indicates
the skewness.

When $\alpha=2,\ \ d=1,\ \ e=0,$ the exact solution of example(\ref{ex1-levy feller-ad}) is $u(x,t)=sin(x)e^{-t}$. Let us consider $T_{final}=3$ and $\triangle t= 0.05$. Figure (\ref{figure2-LFAd}) shows the exact solution and the obtained numerical results by means of the presented scheme JSC for example.(\ref{ex1-levy feller-ad}) when $m=7$ and shows that the EFDA \cite{Q. Liu} is divergent.
 
\begin{table}[h!]\fontsize{8}{8}
	\centering
	\caption{Comparison of the numerical results calculated by EFDA \cite{Q. Liu} when $h=\pi/100$  and by the presented scheme JSC when $m=5$, and $\beta=\gamma=0$ for example (\ref{ex1-levy feller-ad}), where $\alpha=1.7, \ \theta=0.3 \ and \ t=0.3.$}
	\label{table1LFAd}
\begin{tabular}{|ll|ll|l|}
	\hline
	(x,0.3)&   & EFDA in \cite{Q. Liu} &          & Present method JSC \\ \hline
	0.3142 &   &\ 0.23041       &           &  \ 0.21208      \\\hline
	0.6283 &   &\ 0.40603       &            &  \ 0.38590    \\\hline
	0.9425 &   &\ 0.54876       &            & \  0.51814     \\\hline
	1.2566 &   &\ 0.64661       &          &  \ 0.60546    \\\hline
	1.5708 &   &\ 0.68848       &            & \  0.64455     \\\hline
	1.8850 &   &\ 0.66770       &            &  \ 0.63208     \\\hline
	2.1991 &   &\ 0.58292       &            &  \ 0.56471     \\\hline
	2.5133 &   &\ 0.43764       &            &  \ 0.43913    \\\hline
	2.8274 &   &\ 0.23952       &            &  \ 0.25200    \\\hline
	\end{tabular}
\end{table}

\begin{figure}[h!]
	\centering
	\includegraphics[width=.7\textwidth]{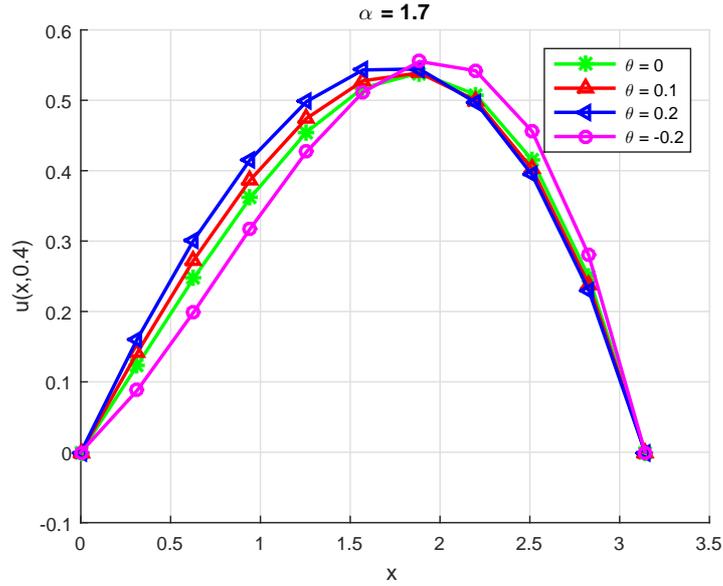}
	\caption{Comparison between, the approximate solution using the proposed method JSC for example (\ref{ex1-levy feller-ad}) at $t = 0.4$ with $\alpha = 1.7$ and different values of $\theta$.}\label{figure1-LFAd}
\end{figure}

\begin{figure}[h!]
	\centering
	\includegraphics[width=0.99\textwidth]{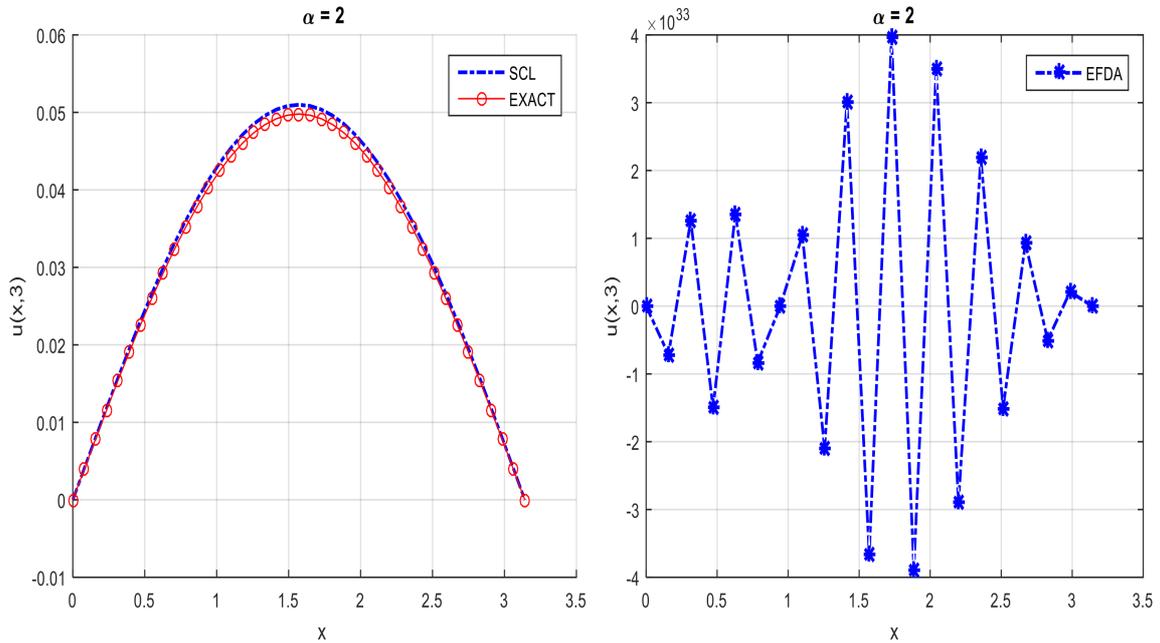}
	\caption{Shows the exact and  numerical solution using EFDA \cite{Q. Liu} and the approximate solution using the proposed method JSC for example (\ref{ex1-levy feller-ad}) at $t = 3$ with $\alpha = 2$.}\label{figure2-LFAd}
\end{figure}
\begin{example}\label{ex2-levy feller-ad}
In this example we consider the following L\'{e}vy-Feller advection-dispersion equation:
	\begin{equation}
	\begin{cases}
	\frac{\partial u(x,t)}{\partial t}=dD^{\alpha}_{\theta}u(x,t)-e\frac{\partial u(x,t)}{\partial x}+s(x,t), \ \ \ \ t>0, \ \ 0< x <1,\\
	u(0,t)=0, \qquad \ u(1,t)=0 ,  \ \ \ \ t>0,\\
	u(x,0)=x(1-x), \qquad  \ 0\leqslant x \leqslant 1,
	\end{cases}
	\end{equation}
	
where $$d=\Gamma(3-\alpha), \qquad e=1,$$ $$ $$ $$s(x,t)=\big{\{}\frac{(2-\alpha)}{sin(\alpha\pi)}\big{(}sin(\frac{(\alpha-\theta)\pi}{2})x^{1-\alpha}+sin(\frac{(\alpha+\theta)\pi}{2})(1-x)^{1-\alpha}\big{)}\quad\quad\quad\qquad\quad$$$$ \qquad\qquad\qquad\ -\frac{2}{sin(\alpha\pi)}\big{(}sin(\frac{(\alpha-\theta)\pi}{2})x^{2-\alpha}+sin(\frac{(\alpha+\theta)\pi}{2})(1-x)^{2-\alpha}\big{)}+\frac{3}{2}x^2-\frac{7}{2}x+1\big{\}}e^{-\frac{3}{2}t},$$ and the exact solution for this equation, when $1<\alpha \leq 2,$ is: $$u(x,t)=x(1-x)e^{-\frac{3}{2}t}.$$
\end{example}

The wighted difference scheme (WDS) which was presented in \cite{M.Ci42} is used here to study numerically example 2, with wight factor $\sigma=0,\ 0.5, \ 1,$ and  the presented scheme JSC also is used. The difference between the approximate solution and the exact solution (absolute error) is given by: $$E_1(x,t)=|u_{_{exact}}(x,t)-u_{_{JSC}}(x,t)| ,\ \ E_2(x,t)=|u_{_{exact}}(x,t)-u_{_{WDS}}(x,t)|,$$ where $E_1\ and \ E_2$ are the errors of the presented scheme JSC and of the WDS \cite{M.Ci42} respectively. Moreover, the maximum absolute errors are given by
$$M_1=max\{E_1(x,t):\ a\leq x\leq b,\ \ 0\leq t\leq T_{final}\},$$ $$\ M_2=max\{E_2(x,t):\ a\leq x\leq b,\ \ 0\leq t\leq T_{final}\},$$ where $M_1 ,\ M_2$ are the maximum absolute errors of JSC and WDS \cite{M.Ci42}, respectively.

In order to show that JSC is more accurate than WDS \cite{M.Ci42}, in table (\ref{table3LFAd}), for $T_{final}=0.5 \ and\  \triangle t= 0.01,$ for problem (\ref{ex2-levy feller-ad}), we compare the errors $E_1 (x,0.5)$ with $E_2(x,0.5)$ when $\alpha=1.4, \ \theta=-0.5$ for various values of $m \  and\  h$.

In table (\ref{table2LFAd}), for $T_{final}=1 \ and\  \triangle t= 0.005,$ for problem (\ref{ex2-levy feller-ad}), we compare the maximum errors $M_1$ when $m=3$ with $M_2$ when $h=0.05$ for various values of $\alpha \  and\  \theta$.

Figure (\ref{figure4-LFAd}) shows the exact solution for Ex.(\ref{ex2-levy feller-ad}) when $T_{final}=2$ and the errors of using  JSC ($m=3$) and WDS \cite{M.Ci42} where $h=0.05$, $\ (\sigma=0)$ and $\triangle t= 0.002$.

Also, Taking $T_{final}=5$ and $\triangle t= 0.02$, figure (\ref{figure3-LFAd}) shows the exact solution for Ex.(\ref{ex2-levy feller-ad}) at $t= 5$ and the obtained numerical results by means of the presented scheme JSC  when $m=3$ and by the WDS $\ (\sigma=0)$ \cite{M.Ci42} when $h=0.1$.
 
\begin{table}[]
	\centering
	\caption{Comparison of the errors calculated by JSC $(\beta=\gamma=0.5$) ($E_1$) and by WDS \cite{M.Ci42} ($E_2$) with $\sigma=0.5$ for example (\ref{ex2-levy feller-ad}) at t= $0.5.$}
	\label{table3LFAd}
	\begin{tabular}{|l|l|l|l|l|l|l|}
		\hline
		& \multicolumn{3}{l|}{\quad\qquad\qquad$E_1(x,0.5)$} & \multicolumn{3}{l|}{\quad\qquad\qquad$E_2(x,0.5)$} \\ \hline
		$x$   & $m=3$     & $m=6$     & $m=12$    & $h=0.1$    &  $h=0.05$ &  $h=0.025$       \\ \hline
		$0$   &  0        &  0        &  0        &  0         & 0         &   0      \\ \hline
		$0.1$ & 1.9827e-05& 1.5831e-06& 1.2334e-06&  1.0946e-02& 9.5111e-03& 8.3772e-03\\ \hline
		$0.2$ & 2.9278e-05& 6.1593e-06& 1.5369e-06&  5.2330e-02& 1.4747e-02&  1.4856e-02\\ \hline
		$0.3$ & 3.0590e-05& 9.4183e-06& 9.5057e-07&  2.1783e-02& 1.7541e-02& 1.9247e-02\\ \hline
		$0.4$ & 2.6003e-05& 1.0927e-05& 6.1780e-06&  5.7803e-02& 1.7939e-02&  2.1571e-02\\ \hline
		$0.5$ & 1.7757e-05& 1.1467e-05& 8.1340e-06&  2.7705e-02& 1.6039e-02& 2.1855e-02\\ \hline
		$0.6$ & 8.0902e-06& 3.1435e-06& 1.2858e-06&  3.8135e-02& 1.2109e-02& 2.0173e-02\\ \hline
		$0.7$ & 7.5812e-06& 9.0309e-07& 9.0856e-07&  1.3635e-02& 6.6847e-03& 1.6691e-02\\ \hline
		$0.8$ & 6.5487e-06& 2.1818e-06& 8.6676e-07&  9.1295e-03& 7.4279e-04&  1.1747e-02\\ \hline
		$0.9$ & 7.0424e-06& 2.3546e-06& 1.7558e-06&  3.5132e-03& 3.7610e-03&  6.0224e-03\\ \hline
		$1$   & 0         &  0        & 0         &   0        &   0       & 0        \\ \hline
	\end{tabular}
\end{table}

\begin{table}[h!]
	\centering
	\caption{Comparison of the maximum errors calculated by JSC $(\beta=\gamma=0)$ ($M_1$) and by WDS \cite{M.Ci42} ($M_2$) for example (\ref{ex2-levy feller-ad})}
	\label{table2LFAd}
	\begin{tabular}{|l|l|l|l|l|}
		\hline
		&\qquad $M_1$ & \multicolumn{3}{l|}{\qquad\qquad\qquad $M_2$} \\ \hline
   $\quad \alpha,\ \ \qquad \theta$& \quad m=3&  $\sigma=1$     & $\sigma=0.5$      & $\sigma=0$      \\ \hline
$\alpha=1.8,\ \theta=0.1$	& 1.0995e-05 &  divergent     & 1.0064e-02     & 9.4012e-03       \\ \hline
$\alpha=1.6,\ \theta=0.1$	& 4.5779e-05 & 2.0452e-02      & 2.0031e-02      & 1.9644e-02      \\ \hline
$\alpha=1.6,\ \theta=0.3$   & 8.9812e-05 & 2.2966e-02      &  2.2429e-02       & 2.1961e-02      \\ \hline
$\alpha=1.4,\ \theta=0.3$   & 7.6254e-05 & 3.9459e-02      & 3.8952e-02      & 1.2893e-02      \\ \hline
$\alpha=1.4,\ \theta=0.5$	& 6.4506e-05 & 4.4378e-02      & 4.3804e-02      & 4.3232e-02      \\ \hline
$\alpha=1.2,\ \theta=0.5 $  & 5.2149e-05 & 6.3144e-02      & 6.2514e-02      & 6.1884e-02      \\ \hline
$\alpha=1.2,\ \theta=-0.5$	& 1.3202e-05 & divergent      & divergent      & divergent      \\ \hline
$\alpha=1.1,\ \theta=-0.5$	& 4.1385e-05 & divergent      & divergent      &  divergent     \\ \hline
	\end{tabular}
\end{table}

\begin{figure}[h!]
	\centering
	\includegraphics[width=1.05\textwidth]{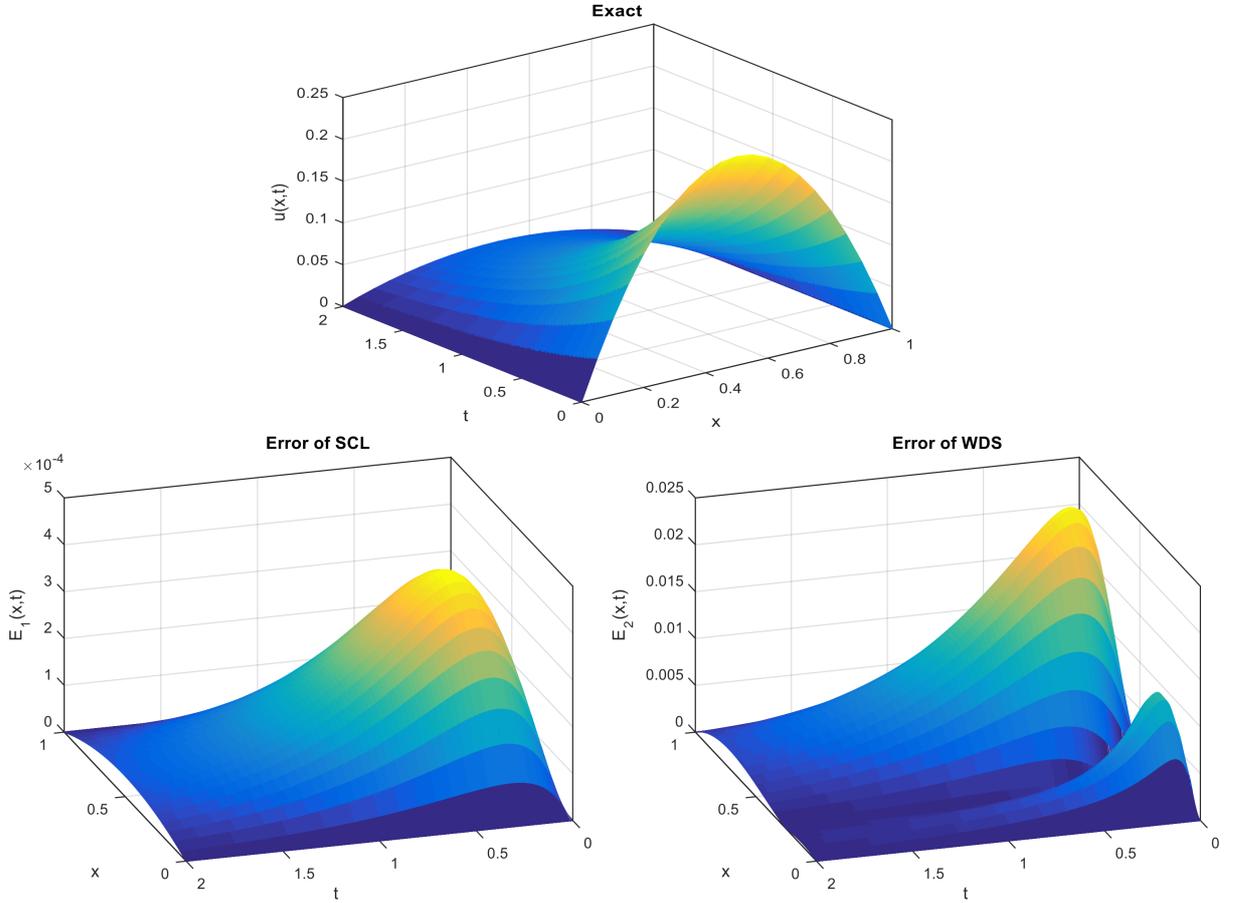}
	\caption{Comparison between, the errors obtain by JSC $(\beta=\gamma=-0.5)$ and by WDS \cite{M.Ci42} for example (\ref{ex2-levy feller-ad}) when $T_{final} = 2$, $\alpha =1.7,$\ and $\theta=0.2$.}\label{figure4-LFAd}
\end{figure}

\begin{figure}[h!]
	\centering
	\includegraphics[width=.71\textwidth]{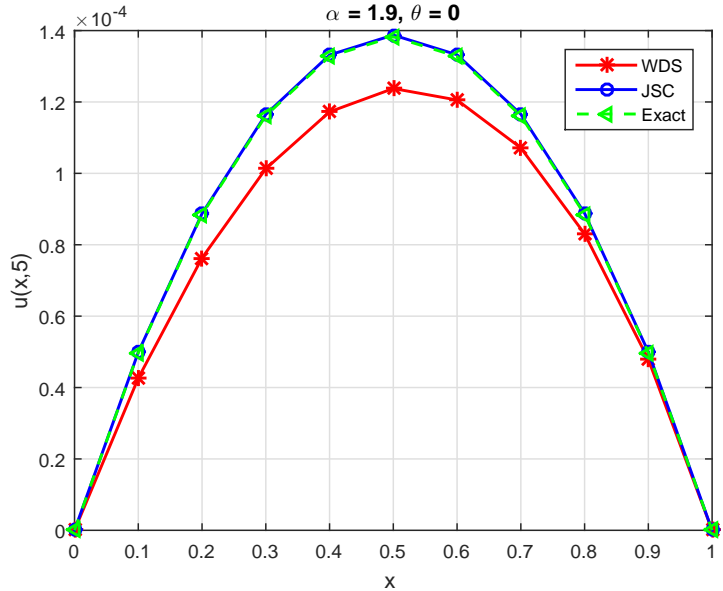}
	\caption{Comparison between, the numerical solution using JSC  and by WDS \cite{M.Ci42} for example (\ref{ex2-levy feller-ad}) at $t = 5$ with $\alpha =1.9,\ \theta=0$.}\label{figure3-LFAd}
\end{figure}



\newpage
\section{Conclusion}
An accurate numerical method is constructed to approximate the solutions of the L\'{e}vy-Feller advection-dispersion equation. This algorithm is based on the Jacobi collocation method in combination with the trapezoidal rule to create a system of algebraic equations of the unknown coefficients of the spectral expansion. One of the main advantages of the presented algorithms is that the availability for application on fractional differential equations, inclusive the case of the fractional derivative is defined in the Riesz-(Feller) sense. Another advantage of the proposed technique is that the high accurate approximate solutions are achieved by using a few number of terms of the suggested expansion. Comparisons between our approximate solutions of the problems with its exact solutions and with the approximate solutions achieved by other methods were introduced to highlight the validity and the accuracy of the proposed scheme. Summarizing, when we used the proposed method for solving both  L\'{e}vy-Feller diffusion  equation and  L\'{e}vy-Feller advection-dispersion equation we found that this method is more efficient than finite difference method which was used in \cite{H. Zhang} and \cite{Q. Liu} for solving these two equations respectively.
%
\bibliographystyle{alpha}\fontsize{10}{10}

\begin{thebibliography}{120}
\bibitem{A. Golbabai}	A. Golbabai, K. Sayevand, Analytical modelling of fractional advection-dispersion equation defined in a bounded space domain, Math. Comput. Modelling, 53, 1708-1718, (2011). 
\bibitem{Bhrawy4} A. H. Bhrawy, M. A Zaky, J. T. Machado, Efficient Legendre spectral tau algorithm
for solving the two-sided space-time Caputo fractional advection-dispersion equation, Journal of Vibration and Control
1-16, (2015), doi : \url{10.1177/1077546314566835}.
\bibitem{Bhrawy2zaky} A. H. Bhrawy, M. A. Zaky, Highly accurate numerical schemes for multi-dimensional space variable-order fractional Schrödinger equations, Computers and Mathematics with Applications 73 (2017), 1100-1117.
\bibitem{Bhrawyzakyjacobi} A. H. Bhrawy, M. A Zaky, An improved collocation method for multi-dimensional space–time variable-order fractional Schrödinger equations, Applied Numerical Mathematics, 111, (2017), 197–218, doi : \url{http://dx.doi.org/10.1016/j.apnum.2016.09.009}.
\bibitem{A. M. A. EI-Sayed} A. M. A. EI-Sayed, S. H. Behiry,W. E. Raslan, Adomian's decomposition method for solving an
intermediate fractional advection-dispersion equation. Comput. Math. Appl., 59, 1759-1765, (2010).
\bibitem{B.Al} B. Al-Saqabi, L. Boyadjiev, Yu. Luchko, Comments on employing the Riesz-Feller derivative in the Schr\"{o}dinger equation. The European Physical Journal Special Topics, 222, 1779-1794, (2013).
\bibitem{Benson D A}  D. A. Benson, S. W. Wheatcraft, M. M. Meerschaert, The fractional-order governing equation of L\'{e}vy motion. Water Resour Res, 36, 1413-1423, (2000).
\bibitem{David} D. A. Kopriva, Implementing Spectral Methods for Partial Differential Equations, Algorithms for Scientists and Engineers, (2009).

\bibitem{F. Huang} F. Huang, F. Liu, The fundamental solution of the space-time fractional advection-dispersion equation, J. Appl. Math. Comput. 18 (1-2), 339-350, (2005).
\bibitem{F. Zeng}  F. Zeng, F. Liu, C. Li, K. Burrage, I. Turner, V. Anh, A Crank-Nicolson adi spectral method for a two-dimensional Riesz space fractional nonlinear reaction-diffusion equation. Siam J. Numer. Anal. Vol. 52, No. 6, 2599-2622, (2014)
.
  \bibitem{Szeg} G. Szeg\H{o}, Orthogonal Polynomials, 4th ed., American Mathematical Society, Providence, R.I., American Mathematical Society, Colloquium Publications, Vol XXIII,( 1975).

\bibitem{H. Zhang}  H. Zhang, F. Liu, V. Anh, Numerical approximation of L\'{e}vy-Feller diffusion equation and its
probability interpretation. Journal of Computational and Applied Mathematics, 206, 1098-1115, (2007).
\bibitem{Podlubny} I. Podlubny, Fractional Differential Equations. Academic Press, San Diego, (1999).

\bibitem{Jie Shen}  J. Shen, T. Tang, Li-Lian Wang, Spectral Methods, Algorithms, Analysis and Applications. Springer-Verlag Berlin Heidelberg (2011).

\bibitem{T.S. Chihara} T.S. Chihara, An Introduction to Orthogonal Polynomials, Math. Appl.,
vol. 13, Gordon and Breach Science Publishers, New York, (1978).
\bibitem{R. Koekoek} R. Koekoek, P. A. Lesky, R. F. Swarttouw, Hypergeometric orthogonal
polynomials and their q-analogues, Springer Monographs in Mathematics,
Springer-Verlag, Berlin, (2010).
\bibitem{M. E. H. Ismail} M. E. H. Ismail, Classical and Quantum Orthogonal Polynomials in One Variable,
vol. 98 of Encyclopedia of Mathematics and its Applications, Cambridge
University Press, Cambridge, (2005).
\bibitem{A. F. Nikiforov} A. F. Nikiforov, S. K. Suslov, V. B. Uvarov, Classical orthogonal polynomials
of a discrete variable, Springer Series in Computational Physics, SpringerVerlag,
Berlin, (1991).


\bibitem{J. P. Roop} J. P. Roop, Computational aspects of FEM approximation of fractional advection-dispersion equations
on bounded domains in $R^2$. J. Comput. Appl. Math., 193, 243-268, (2006).

\bibitem{Trefethen}  L. Trefethen, Spectral Methods in MATLAB, Software, Environments, and Tools, vol 10.
Society for Industrial and Applied Mathematics (SIAM), Philadelphia, PA,  (2000).

\bibitem{L.B. Feng}  L. B. Feng, P. Zhuang, F. Liu, I. Turner, J. Li, High-order numerical methods for the Riesz space fractional advection-dispersion equations, Computers and Mathematics with Applications, (2016), \url{http://dx.doi.org/10.1016/j.camwa.2016.01.015}.
\bibitem{M.Ci4} M. Ciesielski, J. Leszczynski, Numerical solutions to boundary value problem for anomalous diffusion equation with Riesz-Feller fractional operator. Journal of Theoretical and Applied Mechanics, 44, 2, 393-403, (2006).
\bibitem{M.Ci42} M. Ciesielski, J. Leszczynski, Numerical treatment
of an initial-boundary value problem for fractional partial differential equations, Signal Processing, 86, 10, 2503-3094, (2006).
\bibitem{Khader} M. M. Khader, N. H. Sweilam, On the approximate solutions for system of fractional integro-differential equations using Chebyshev pseudo-spectral method, Applied Mathematical Modelling, 37, 9819-9828, (2013).
\bibitem{Khader2} M. M. Khader, N. H. Sweilam and A.M.S. Mahdy, An Efficient Numerical Method for Solving the Fractional Diffusion Equation, Journal of Applied Mathematics and Bioinformatics, vol.1, no.2, 1-12, (2011).
\bibitem{M.M. Meerschaert}  M. M. Meerschaert, C. Tadjeran, Finite difference approximations for fractional advection-dispersion flow equations, J. Comput. Appl. Math. 172, 65-77, (2004).
\bibitem{Raberto M} M. Raberto, E. Scalas, F. Mainardi, Waiting-times and returns in high-frequency financial data, An empirical study. Physica A, 314, 749-755, (2002).
\bibitem{Eslahchi} M. R. Eslahchi, M. Dehghan, M. Parvizi, Application of the collocation method for solving
nonlinear fractional integro-differential equations. J. Comput. Appl. Math. 257, 105-128, (2014).
\bibitem{N. H. Sspectr1}  N. H. Sweilam, M. M. Khader, A Chebyshev pseudo-spectral
method for solving fractional order integro-differential equations. ANZIAM 51, 464-475, (2010).

\bibitem{N. H. hnansp} N. H. Sweilam, M. M. Khader, H. M. Almarwn, Numerical studies for the variable
order nonlinear fractional wave equation, FCAA., 15, 4, (2012).
\bibitem{N. H. Sspectr2} N. H. Sweilam, A. M. Nagy, Adel A. El-Sayed, Second kind shifted Chebyshev polynomials for solving space fractional order diffusion equation, Chaos. Solitons Fract., 73, 141-147, (2015).

\bibitem{N. H. Sspectr3}  N. H. Sweilam, A. M. Nagy, Adel A. El-Sayed, On the numerical solution of space fractional order diffusion equation via shifted Chebyshev polynomials of the third kind, Journal of King Saud University - Science (2015), \url{ http://dx.doi.org/10.1016/j.jksus.2015.05.002}.
\bibitem{N. H. Muner}  N. H. Sweilam, M. M. Abou Hasan, Numerical approximation of L\'{e}vy-Feller fractional diffusion equation via Chebyshev-Legendre collocation method, Eur. Phys. J. Plus (2016) 131: 251, \url{DOI 10.1140/epjp/i2016-16251-y}. 

\bibitem{N. H. MunerNONL}  N. H. Sweilam, T. Assiri, M. M. Abou Hasan, Numerical solutions of nonlinear fractional Schr\"{o}dinger equations using nonstandard discretizations, Numer Methods Partial Differential Equations, 1-21,  (2016) , \url{DOI 10.1002/num.22117}. 

\bibitem{N. H. Tuan} N. H. Tuan, D. N. D. Hai, L. D. Long, V. T. Nguyen, M. Kirane, On a Riesz-Feller space fractional backward diffusion problem with a nonlinear source, Journal of Computational and Applied Mathematics (2016), \url{ http://dx.doi.org/10.1016/j.cam.2016.01.003}.
\bibitem{Q. Liu}   Q. Liu, F. Liu, I. Turner, V. Anh, Approximation of the L\'{e}vy-Feller advection-dispersion process by random walk and finite difference method, Journal of Computational Physics, 222, 57-70, (2007).

\bibitem{Askey2}  R. Askey, J. Fitch, Integral representations for Jacobi polynomials and some applications,
Journal Of Mathematical Analysis And Applications, 26, 411-437 (1969).

\bibitem{Askey} R. Askey,  Inequalities via fractional integration. In: Fractional Calculus and Its Applications. Lecture Notes in Mathematics, 457. Berlin: Springer-Verlag, 106-115, (1975).


\bibitem{Koeller R C} R. C. Koeller,  Application of fractional calculus to the theory of viscoelasticity. J Appl Mech, 51, 229-307, (1984).



\bibitem{R.He} R. Herrmann, Fractional Calculus, An Introduction For Physicists. World Scientific Publishing Co. Pte. Ltd., (2011).
\bibitem{R. Hilfer} R. Hilfer, Application of Fractional Calculus in Physics. Singapore: World Scientific,  (2000).

\bibitem{R. Metzler}  R. Metzler and J. Klafter, The random walk's guide to anomalous diffusion, A fractional  dynamics approach, Phys. Rep., 339, 1–77, (2000).
\bibitem{S. Das}  S. Das, Analytical solution of a fractional diffusion equation by variational iteration method, Comput. Math. Appl., 57, 3, 483-487, (2009).
\bibitem{E. Godoy} E. Godoy, R. Ronveaux, A. Zarzo, I. Area. Minimal recurrence relations for
connection coefficients between classical orthogonal polynomials: continuous case.
J. Comput. Appl. Math., 84, 257-275 (1997).
\bibitem{Shahrokh} S. Esmaeili, R. Garrappa, A pseudo-spectral scheme for the approximate solution of a time-fractional diffusion equation, International Journal of Computer Mathematics, 
Vol. 92, No. 5, 980-994, (2015), \url{ http://dx.doi.org/10.1080/00207160.2014.915962}
\bibitem{Samko S G}  S. G. Samko, A. A. Kilbas, O. I. Marichev, Fractional Integrals and Derivative: Theory and Applications. New York: Gordon and Breach, (1993).
\bibitem{S. Shen} S. Shen, F. Liu, V. Anh, I. Turner, J. Chen, A novel numerical approximation for the space fractional
advection-dispersion equation, IMA Journal of Applied Mathematics, 79 (2014) 431-444.
\bibitem{V. Daftardar} V. Daftardar-Gejji, H. Jafari, Solving a multi-order fractional differential equation using Adomian decomposition, Appl. Math. Comput., 189, 1, 541-548, (2007).
\bibitem{V.J. Ervin} V.J . Ervin, J.P. Roop, Variational formulation for the stationary fractional advection dispersion equation, Numer. Methods Partial Differential Equations, 22, 558-576, (2006).
\bibitem{W. W. Bell} W. W. Bell, Special Functions for Scientists and Engineers. Great Britain, Butler and Tanner Ltd, Frome and London, (1968).
\bibitem{feller} W. Feller, On a generalization of Marcel Riesz' potentials and the semi-groups generated by them, Meddelanden Lunds Universitets Matematiska Seminarium (Comm. S\'{e}m. Math\'{e}m. Universit\'{e} de Lund), Tome suppl. d\'{e}di\'{e} \`{a} M. Riesz, Lund, 73, (1952).

\bibitem{Wai Sun} W. Sun Don, D. Gottlieb, The Chebyshev-Legendre Method: Implementing Legendre Methods on Chebyshev Points, SIAM Journal on Numerical Analysis, Vol. 31, No. 6, pp. 1519-1534, (1994).
\bibitem{MaX} X. Ma, C. Huang, Spectral collocation method for linear fractional integro-differential equations.
Appl. Math. Modell., 38, 1434-1448, (2014)
\bibitem{Y. Cenesiz} Y. Cenesiz, Y. Keskin, A. Kurnaz, The solution of the Bagley-Torvik equation with the generalized Taylor collocation method, J. Franklin Inst., 347, 2, 452-466, (2010).
\bibitem{Y. Yan} Y. Yan, K. Pal, N. J. Ford, Higher order numerical methods for solving fractional differential equations, BIT Numer. Math., 54, 2, 555-584, (2014).

\end{thebibliography}

\end{document}